\documentclass[preprint,12pt]{elsarticle}

\usepackage{amssymb}
\usepackage{amsthm}
\usepackage{amsmath}
\usepackage{fixltx2e}
\usepackage[english]{babel}
\usepackage{hyphenat}

\theoremstyle{plain}
\newtheorem{theorem}{Theorem}
\newtheorem{lemma}{Lemma}

\newtheorem{proposition}{Proposition}
\theoremstyle{definition}
\newtheorem{definition}{Definition}

\theoremstyle{remark}
\newtheorem{remark}{Remark}

\newcommand{\R}{\mathbb R}
\newcommand{\N}{\mathbb N}
\newcommand{\Z}{\mathbb Z}
\newcommand{\abs}[1]{\left| #1 \right|}
\newcommand{\cf}{\operatorname{cf}}

%For list numbering in theorems:
\newcommand{\thmpart}[1]{#1.}
\newcommand{\itemref}[1]{\ref{item:#1}}
\newcommand{\itemfollows}[2]{\itemref{#1} $\Rightarrow$ \itemref{#2}}

\journal{Topology and its Applications}

\begin{document}

\begin{frontmatter}

\title{Urysohn's metrization theorem for higher cardinals}

\author{Joonas Ilmavirta}

\address{Department of Physics,
University of Jyv\"askyl\"a,\newline
P.O. Box 35 (YFL)
FI-40014 University of Jyv\"askyl\"a,
Finland}
\ead{joonas.ilmavirta@jyu.fi}

\begin{abstract}
In this paper a generalization of Urysohn's metrization theorem is given for higher cardinals. Namely, it is shown that a topological space with a basis of cardinality at most $|\omega_\mu|$ or smaller is $\omega_\mu$-metrizable if and only if it is $\omega_\mu$-additive and regular, or, equivalently, $\omega_\mu$-additive, zero-dimensional, and T\textsubscript{0}.
Furthermore, all such spaces are shown to be embeddable in a suitable generalization of Hilbert's cube.

\end{abstract}

\begin{keyword}
$\omega_\mu$-metric space \sep
Urysohn's metrization theorem \sep
embedding theorem \sep
$\omega_\mu$-additive space

\MSC[2000]
54F65 \sep
54C25 \sep
54A25 \sep
54D70 \sep
54D10 \sep
54D20
\end{keyword}

\end{frontmatter}

%% Start of text

\section{Introduction}
\label{sec:intro}

In this section the concept of $\omega_\mu$\hyp{}metric spaces is defined and briefly discussed. For a more elaborate description of $\omega_\mu$\hyp{}metric spaces, see e.g.~\cite{top-char}. Section~\ref{sec:prelim} is dedicated to some preliminary results which are then used to prove an extension of Urysohn's metrization theorem %~\ref{thm:uryson}
in section~\ref{sec:metr}.

A number of properties of a topological space equivalent with $\omega_\mu$\hyp{}metrizability were given
by Hodel~\cite{metr-thms}
and by Nyikos and Reichel~\cite{top-char}.
The special case of spaces with a basis of cardinality at most $\abs{\omega_\mu}$ seems not to have been considered. A complete characterization of such $\omega_\mu$\hyp{}metrizable topological spaces is exhibited here in Theorem~\ref{thm:uryson} in terms of simple topological properties which may be easier to verify than those required by more general metrization theorems.

%\begin{definition}
If $G$ is an ordered abelian group, $X$ is a nonempty set and $d\!:\!X \times X \to G$ is a function such that
\begin{enumerate}
	\item $d(x,y) \geq 0$ for all $x,y \in X$,
	\item $d(x,y) = 0$ only if $x=y$,
	\item $d(x,y) = d(y,x)$ for all $x,y \in X$, and
	\item $d(x,y) \leq d(x,z) + d(z,y)$ for all $x,y,z \in X$,
\end{enumerate}
then $d$ is a \emph{metric of $X$ over $G$}. Any such metric $d$ gives rise to a topology of $X$ whose basis consists of the open balls
\begin{equation}
%\label{eq:}
	B_d(x,r) = \left\{y \in X : d(x,y) < r\right\},
\end{equation}
for all $x \in X$ and $r \in G$, $r > 0$. Where no confusion is possible, the subscript $d$ is omitted. If $X$ is a topological space and there is a metric $d$ of $X$ over $G$ giving rise to the same topology, then $X$ is \emph{$G$\hyp{}metrizable} and the pair $(X,d)$ is a \emph{$G$\hyp{}metric space}.
%\end{definition}

Of special interest are the groups $\Z^\alpha$ and $\R^\alpha$, where $\alpha$ is any non-zero ordinal, with lexicographical order and componentwise addition. Any element $x\in\Z^\alpha$ is a sequence $(x_\lambda)_{\lambda\in\alpha}$ indexed by $\alpha$ with each $x_\lambda\in\Z$, and similarly for $\R^\alpha$.

For any $\lambda\in\alpha$,
we define elements $r^\lambda$ in $\Z^\alpha$ (or in $\R^\alpha$)
by setting $r^\lambda_\lambda=1$ and $r^\lambda_\nu=0$ for all $\nu\neq\lambda$. One can immediately see that if $\alpha$ is a regular ordinal and $(X,d)$ is a $Z^\alpha$\hyp{}metric (or $R^\alpha$\hyp{}metric) space, then the collection
\begin{equation}
%\label{eq:}
	\left\{B_d(x,r^\lambda) : x \in X, \lambda\in\alpha\right\}
\end{equation}
is a basis for the topology of $X$.

\begin{proposition}
\label{prop:cof reduction}
If a topological space $X$ is $\Z^\alpha$\hyp{}metrizable, it is also $\Z^{\cf\alpha}$\hyp{}metrizable, where $\cf\alpha$ is the cofinality of $\alpha$. Similarly, if $X$ is $\R^\alpha$\hyp{}metrizable, it is also $\R^{\cf\alpha}$\hyp{}metrizable.
\end{proposition}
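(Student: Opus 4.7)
The plan is to collapse $\Z^\alpha$ to $\Z^{\cf\alpha}$ along a cofinal map and read off a metric from $d$. Set $\beta = \cf\alpha$ and fix a strictly increasing cofinal map $\phi\colon \beta \to \alpha$. For $x, y \in X$ with $x \neq y$, let $\nu(x,y) \in \alpha$ be the least index at which $d(x,y)$ is non-zero (well defined because $\alpha$ is well-ordered and $d(x,y) > 0$), and let $\lambda(x,y) \in \beta$ be the least $\lambda$ with $\phi(\lambda) \geq \nu(x,y)$, which exists by cofinality. Define
\begin{equation*}
  d'(x,y) = \begin{cases} r^{\lambda(x,y)} & \text{if } x \neq y, \\ 0 & \text{if } x = y. \end{cases}
\end{equation*}
The same construction handles the $\R^\alpha$ case, since each $r^\lambda$ sits in $\Z^\beta \subseteq \R^\beta$.

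The crucial lex-order observations, both easy to verify, are: if $0 \leq A \leq B$ in $\Z^\alpha$ and $A \neq 0$, then the first non-zero index of $A$ is at least that of $B$; and if $A, B > 0$, then the first non-zero index of $A + B$ equals the minimum of those of $A$ and $B$. Applied to the triangle inequality $d(x,y) \leq d(x,z) + d(z,y)$ (with $x, y, z$ pairwise distinct; the degenerate cases are trivial), these give $\nu(x,y) \geq \min(\nu(x,z), \nu(z,y))$, and hence $\lambda(x,y) \geq \min(\lambda(x,z), \lambda(z,y))$ because $\phi$ is increasing. It follows that $r^{\lambda(x,y)} \leq r^{\lambda(x,z)} + r^{\lambda(z,y)}$, which is the triangle inequality for $d'$. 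Non-negativity, symmetry and positive-definiteness are immediate.

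It remains to show that $d$ and $d'$ induce the same topology. If $\alpha = \gamma + 1$ is a successor, then $\cf\alpha = 1$ and both topologies are discrete---for $d$ because $B_d(x, r^\gamma) = \{x\}$, for $d'$ because it is the constant-$1$ $\Z$-metric. For $\alpha$ a limit, one computes $B_{d'}(x, r^{\lambda_0}) = \{y : \nu(x,y) > \phi(\lambda_0)\}$, and a short lex-order check shows this set contains $B_d(y, r^{\phi(\lambda_0)+1})$ about each of its points, so it is $d$-open. Conversely, for a basic $d$-ball $B_d(x, r^\nu)$ and $y$ in it, set $h = r^\nu - d(x,y) > 0$ and use cofinality to pick $\lambda_0 \in \beta$ with $\phi(\lambda_0)$ at least the first non-zero index of $h$; then $B_{d'}(y, r^{\lambda_0}) \subseteq B_d(x, r^\nu)$ follows from the triangle inequality for $d$. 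I expect the main obstacle to be this topology-matching step, since the lex-order bookkeeping is slightly fiddly, though each individual inequality reduces to a one-line calculation on first non-zero indices.
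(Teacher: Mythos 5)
Your core construction for the limit case is sound: the collapse $d'(x,y)=r^{\lambda(x,y)}$ is essentially the same trick the paper uses to pass from $\R^\alpha$ to $\Z^\alpha$ in Proposition~\ref{prop:Z R}, and your lex-order bookkeeping (first nonzero index of a sum, monotonicity under $\leq$, the two ball inclusions) is correct; the only small omission there is that in the $\R^\alpha$ case you should say why the balls $B_d(x,r^\nu)$ form a basis at all (for limit $\alpha$ this is one line: a positive $g$ with first nonzero index $\nu$ satisfies $r^{\nu+1}<g$, so these radii are coinitial). The paper's own proof instead just restricts the radii to a cofinal $L\subset\alpha$ and asserts $\Z^L$-metrizability, so your explicit metric is, if anything, more complete on this point.

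The genuine gap is your successor case for $\R^\alpha$. If $\alpha=\gamma+1$, the topology of an $\R^{\gamma+1}$-metric space need \emph{not} be discrete: already for $\alpha=1$ (the real line with its usual metric), or for $X=\R$ with $d(x,y)$ having $|x-y|$ in coordinate $\gamma$ and $0$ elsewhere, the ball $B_d(x,r^\gamma)$ is much larger than $\{x\}$, because an element supported only at the last coordinate with value in $(0,1)$ is positive but smaller than $r^\gamma$. Your claim ``$B_d(x,r^\gamma)=\{x\}$'' is valid only for $\Z$-valued metrics, where $r^\gamma$ is the least positive element of $\Z^{\gamma+1}$. Consequently your $d'$, which is the constant-$1$ metric when $\cf\alpha=1$, induces the discrete topology and fails to reproduce the original one in the $\R$ case, so that half of the statement is not proved. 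The statement is still true there, but it needs a collapse that remembers the real value at the top coordinate, e.g.\ $\rho(x,y)=1$ if $d(x,y)$ has a nonzero coordinate below $\gamma$, and $\rho(x,y)=\min\{1,d(x,y)_\gamma\}$ otherwise; one checks $\rho$ is a real-valued metric and that $B_\rho(x,\epsilon)=B_d(x,\epsilon r^\gamma)$ for $\epsilon\leq 1$, with the radii $\epsilon r^\gamma$ coinitial among the positive elements of $\R^{\gamma+1}$. With that case repaired (and the one-line basis remark above), your argument gives the proposition in full.
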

\begin{proof}
Let $X$ be a $\Z^\alpha$\hyp{}metric space, and let $L \subset \alpha$ be a cofinal subset order isomorphic to $\cf\alpha$. Then the topology given by the basis
\begin{equation}
%\label{eq:}
	\left\{B(x,r^\lambda) : x \in X, \lambda\in L\right\}
\end{equation}
is immediately seen to be identical to the original metric topology of $X$. Thus $X$ is $\Z^L$\hyp{}metrizable. The same argument holds for $\R^\alpha$\hyp{}metrizable spaces.
\end{proof}

\begin{proposition}
\label{prop:Z R}
Let $\alpha$ be an infinite regular ordinal. A topological space $X$ is $\Z^\alpha$\hyp{}metrizable if and only if it is $\R^\alpha$\hyp{}metrizable.
\end{proposition}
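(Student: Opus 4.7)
The plan is to prove the two implications separately. The forward direction from $\Z^\alpha$\hyp{}metrizability to $\R^\alpha$\hyp{}metrizability is essentially a reinterpretation, while the reverse direction is the substantive step and requires an explicit construction of a new integer\hyp{}valued metric.

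For the forward direction, suppose $(X,d)$ is a $\Z^\alpha$\hyp{}metric space. The natural inclusion $\Z^\alpha \hookrightarrow \R^\alpha$ is a homomorphism of ordered abelian groups, so the same $d$, reinterpreted as taking values in $\R^\alpha$, still satisfies all four metric axioms. The vectors $r^\lambda$ denote the same elements in either group, and since $\alpha$ is regular, the observation preceding Proposition~\ref{prop:cof reduction} gives that $\{B_d(x,r^\lambda) : x \in X,\, \lambda \in \alpha\}$ is a basis for the induced topology under either interpretation, so the two topologies coincide.

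For the reverse direction, suppose $d$ is an $\R^\alpha$\hyp{}metric generating the topology of $X$. For each pair $x \neq y$, let $\mu(x,y) \in \alpha$ denote the least index with $d(x,y)_\lambda \neq 0$; this exists because $\alpha$ is well\hyp{}ordered and $d(x,y) > 0$ in the lex order forces the first nonzero coordinate to be positive. I would define $d' : X \times X \to \Z^\alpha$ by $d'(x,x) = 0$ and $d'(x,y) = r^{\mu(x,y)}$ for $x \neq y$. Positivity, definiteness, and symmetry are immediate from the definition. The triangle inequality for $d'$ rests on the lex\hyp{}ordered ultrametric\hyp{}like estimate $\mu(x,z) \geq \min(\mu(x,y), \mu(y,z))$, which I would derive from the triangle inequality for $d$: if this estimate failed, the first nonzero coordinate of $d(x,z)$ would occur strictly before that of $d(x,y) + d(y,z)$, forcing $d(x,z) > d(x,y) + d(y,z)$ in lex order and contradicting the triangle inequality for $d$. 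A short case analysis then confirms $r^{\mu(x,z)} \leq r^{\mu(x,y)} + r^{\mu(y,z)}$ in $\Z^\alpha$.

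To match topologies, I would unpack the lex order to obtain $B_{d'}(x, r^\lambda) = \{x\} \cup \{y : \mu(x,y) > \lambda\}$ directly, and analogously $\{x\} \cup \{y : \mu(x,y) > \lambda\} \subseteq B_d(x, r^\lambda) \subseteq \{x\} \cup \{y : \mu(x,y) \geq \lambda\}$. These yield the chain $B_d(x, r^{\lambda+1}) \subseteq B_{d'}(x, r^\lambda) \subseteq B_d(x, r^\lambda)$, where the existence of $\lambda+1 \in \alpha$ uses that an infinite regular $\alpha$ is a limit ordinal. Since the $r^\lambda$\hyp{}balls form a basis for both topologies, the double inclusion forces them to agree. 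The main obstacle throughout is the lex\hyp{}ordered ultrametric\hyp{}like estimate on $\mu$: all other steps are routine bookkeeping once this non\hyp{}archimedean\hyp{}type inequality, extracted from the genuine triangle inequality for $d$, is in hand.
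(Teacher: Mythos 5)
Your proposal is correct and follows essentially the same route as the paper: the forward direction via the inclusion $\Z^\alpha\subset\R^\alpha$, and the reverse direction via the metric $d'(x,y)=r^{\mu(x,y)}$ built from the first nonzero coordinate, with the triangle inequality reduced to the ultrametric-type estimate and the topologies matched by the inclusions $B_d(x,r^{\lambda+1})\subseteq B_{d'}(x,r^\lambda)\subseteq B_d(x,r^\lambda)$. The only difference is that you spell out the ball identifications and the forward-direction topology comparison a bit more explicitly than the paper does, which is harmless.
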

\begin{proof}
Since $\Z^\alpha$ is a subgroup of $\R^\alpha$, any $\Z^\alpha$\hyp{}metric space is trivially an $\R^\alpha$\hyp{}metric space.

Let then $(X,d)$ be a $\R^\alpha$\hyp{}metric space. For any points $x,y \in X$, $x \neq y$, let $n_{xy} = \min \{\lambda\in\alpha:d_\lambda(x,y) \neq 0\}$. One can define a $\Z^\alpha$\hyp{}metric $\delta$ for $X$ by setting $\delta(x,x)=0$ and $\delta(x,y)=r^{n_{xy}}$ when $x \neq y$.

Let $x,y,z \in X$ be any three distinct points. Because $d$ obeys the triangle inequality, one has $\min\{n_{xz},n_{yz}\} \leq n_{xy}$, and so $\max\{r^{n_{xz}},r^{n_{yz}}\} \geq r^{n_{xy}}$. This implies $\max\{\delta(x,z),\delta(y,z)\} \geq \delta(x,y)$, and hence $\delta(x,z)+\delta(y,z) \geq \delta(x,y)$.
Thus $\delta$ obeys the triangle inequality; the other conditions for a metric are obviously fulfilled. Since $B_\delta(x,r^\lambda) \supset B_d(x,r^{\lambda+1})$ and $B_d(x,r^\lambda) \supset B_\delta(x,r^{\lambda+1})$ for all $x \in X$ and $\lambda\in\alpha$, the two metric topologies are the same.
\end{proof}

Due to Proposition~\ref{prop:cof reduction} only regular ordinals $\alpha$ need to be considered. Every infinite regular ordinal $\alpha$ is an initial ordinal, that is $\alpha=\omega_\mu$ for some ordinal $\mu$. Moreover, for a finite $\alpha$ one has $\cf\alpha=1$, which yields either the discrete $\Z^1$\hyp{}metric or the usual $\R^1$\hyp{}metric.

\begin{definition}
\label{def:omega metr}
A topological space $X$ is an \emph{$\omega_\mu$\hyp{}metric space} if it is a $\Z^{\omega_\mu}$\hyp{}metric space (equivalently $\R^{\omega_\mu}$\hyp{}metric space by Proposition~\ref{prop:Z R}).
\end{definition}

Unless otherwise stipulated, every $\omega_\mu$\hyp{}metric will be assumed to take values in $\Z^{\omega_\mu}$ instead of $\R^{\omega_\mu}$.

%Note that $\omega_0$\hyp{}metrizability here is not defined to be the same as metrizability. A space $X$ is $\omega_0$\hyp{}metrizable if there is a metric $d\!:\!X \times X\to \Z^{\omega_0}$ as defined above. Many authors define $\omega_0$\hyp{}metrizability to mean metrizability.

\section{Preliminaries}
\label{sec:prelim}

%In this section an analogue of the connection between Lindelöf property, separability and countable basis for metric spaces are presented for $\omega_\mu$\hyp{}metric spaces.

%\begin{definition}
Let $\kappa$ be a cardinal. A topological space $X$ is \emph{$\kappa$\hyp{}additive} if for any collection
$\{U_i:i \in I\}$ %$\mathcal{C}=\{U_i:i \in I\}$
of open sets in $X$ the intersection $\bigcap_{i \in I}U_i$ is open whenever $\abs{I}<\kappa$. An $\abs{\omega_\mu}$\hyp{}additive space is also called an $\omega_\mu$\hyp{}additive space.%\footnote{Although it is common to identify the cardinal $\abs{\omega_\mu}$ with its initial ordinal $\omega_\mu$, I find it more clear to consider $\omega_\mu$ as a well ordered index set and not identify it with its cardinality.}
%\end{definition}

\begin{proposition}
\label{prop:N2 sep lind top}
Let $X$ be topological space with a basis of cardinality $\kappa$ or smaller. Then
\begin{enumerate}
	%\item there is a dense subset of $X$ whose cardinality is at most $\kappa$, and
	\item $X$ contains a dense set whose cardinality is at most $\kappa$, and
	\item every open cover of $X$ has a subcover whose cardinality is at most $\kappa$.
\end{enumerate}
\end{proposition}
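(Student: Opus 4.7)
The plan is to prove both parts by the standard device of choosing representatives associated with a fixed basis of cardinality at most $\kappa$. Let $\mathcal{B}$ be such a basis, so that $\abs{\mathcal{B}} \leq \kappa$.

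For the first part, I would invoke the axiom of choice to pick a point $x_B \in B$ for every nonempty $B \in \mathcal{B}$, and let $D$ be the collection of all such $x_B$. Then $\abs{D} \leq \abs{\mathcal{B}} \leq \kappa$. To verify density, let $U \subset X$ be any nonempty open set. Since $\mathcal{B}$ is a basis, $U$ contains some nonempty $B \in \mathcal{B}$, and hence $x_B \in U \cap D$. Thus $D$ meets every nonempty open set and is dense.

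For the second part, let $\{V_i : i \in I\}$ be an arbitrary open cover of $X$. For each $x \in X$ pick some index $i(x) \in I$ with $x \in V_{i(x)}$, and then pick some basis element $B_x \in \mathcal{B}$ with $x \in B_x \subset V_{i(x)}$. The subfamily $\mathcal{B}' = \{B_x : x \in X\} \subset \mathcal{B}$ has cardinality at most $\kappa$ and still covers $X$. For each $B \in \mathcal{B}'$, choose one index $j(B) \in I$ with $B \subset V_{j(B)}$ (for instance, $j(B) = i(x)$ for any $x$ with $B_x = B$). Then $\{V_{j(B)} : B \in \mathcal{B}'\}$ is a subcover of $\{V_i : i \in I\}$ of cardinality at most $\kappa$.

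Neither step is really a genuine obstacle; this is essentially the statement that the weight of a space bounds both its density and its Lindel\"of degree. The only things to be a little careful about are the repeated appeals to choice (used to pick $x_B$, $i(x)$, $B_x$, and $j(B)$) and the fact that the argument is uniform in $\kappa$, so that no separate treatment of the finite case is needed beyond noting that if $\kappa$ is finite and $\mathcal{B}$ is empty then $X$ is empty and both conclusions hold vacuously.
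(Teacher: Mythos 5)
Your proof is correct and follows essentially the same route as the paper: picking one point in each basis element for density, and passing through a subfamily of the basis (one cover member chosen per relevant basis element) for the subcover. No issues.
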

\begin{proof}
Let $\mathcal{B}$ be a basis for the topology of $X$ such that $\abs{\mathcal{B}}\leq\kappa$.

\thmpart{1} For every set $A \in \mathcal{B}$ there is an element $x_A \in A$. The set $\{x_A:A \in \mathcal{B}\}$ is obviously dense and its cardinality cannot exceed $\kappa$.

\thmpart{2} Let $\mathcal{C}$ be any open cover of $X$. If $\mathcal{B}'$ is the collection of sets $B\in\mathcal{B}$ such that $B \subset U$ for some $U\in\mathcal{C}$, one can choose for every $B\in\mathcal{B}'$ a set $U_B$ with $B \subset U_B \in \mathcal{C}$. The collection $\mathcal{C}'=\{U_B:B \in \mathcal{B}'\}$ is the subcover sought for: indeed, each $x \in X$ is an interior point of some $U \in \mathcal{C}$, and so $x \in B \subset U$ for some $B\in\mathcal{B}$ and thus $\bigcup_{B\in\mathcal{B}'}U_B=X$.
\end{proof}

\begin{proposition}
\label{prop:N2 sep lind metr}
In any $\omega_\mu$\hyp{}metric space $X$ either one of the following two conditions is sufficient to guarantee that the topology of $X$ have a basis of cardinality $\abs{\omega_\mu}$ or smaller:
\begin{enumerate}
	%\item There is a dense subset $A \subset X$ such that $\abs{A}\leq\abs{\omega_\mu}$.
	\item $X$ contains a dense subset whose cardinality is at most $\abs{\omega_\mu}$.
	%\item Every open cover $\mathcal{C}$ of $X$ has a subcover $\mathcal{C}'$ such that $\abs{\mathcal{C}'}\leq\abs{\omega_\mu}$.
	\item Every open cover of $X$ has a subcover whose cardinality is at most $\abs{\omega_\mu}$.
\end{enumerate}
\end{proposition}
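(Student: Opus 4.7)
The strategy is to mimic the classical proof that a separable (respectively, Lindel\"of) metric space is second countable, building a basis out of balls $B(x,r^\lambda)$ centred at a small set of points. The role of the classical estimate $\tfrac{1}{n}+\tfrac{1}{n}\le\tfrac{1}{m}$ is played by the lexicographic inequality $r^{\lambda+1}+r^{\lambda+1}<r^\lambda$ in $\Z^{\omega_\mu}$: the left side is first nonzero at coordinate $\lambda+1$ while the right side is first nonzero at the earlier coordinate $\lambda$, so the left side is strictly smaller. Combined with the triangle inequality this gives the absorption inclusion
\[
\text{if } d(x,y)<r^{\lambda+1},\text{ then } B(x,r^{\lambda+1})\subset B(y,r^\lambda),
\]
which is the only nontrivial geometric fact used below. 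I will also use, without further comment, the cardinal identity $\abs{\omega_\mu}^2=\abs{\omega_\mu}$, valid because $\omega_\mu$ is an infinite initial ordinal.

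For part (1), given a dense set $D\subset X$ with $\abs{D}\le\abs{\omega_\mu}$, I would propose the family $\mathcal{B}'=\{B(x,r^\lambda):x\in D,\ \lambda\in\omega_\mu\}$. Its cardinality is at most $\abs{\omega_\mu}^2=\abs{\omega_\mu}$. To verify that it is a basis, fix an open set $U$ and a point $y\in U$; using the canonical basis from the introduction, choose $\lambda\in\omega_\mu$ with $B(y,r^\lambda)\subset U$, and by density pick $x\in D\cap B(y,r^{\lambda+1})$. Then $y\in B(x,r^{\lambda+1})$ and the absorption inclusion gives $B(x,r^{\lambda+1})\subset B(y,r^\lambda)\subset U$, as required.

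For part (2), for each $\lambda\in\omega_\mu$ the family $\{B(x,r^\lambda):x\in X\}$ covers $X$, so by hypothesis it has a subcover indexed by a set $X_\lambda\subset X$ with $\abs{X_\lambda}\le\abs{\omega_\mu}$. I would then take $\mathcal{B}'=\bigcup_{\lambda\in\omega_\mu}\{B(x,r^\lambda):x\in X_\lambda\}$, again of cardinality at most $\abs{\omega_\mu}$, and check the basis condition exactly as before: given $y\in U$, pick $\lambda$ with $B(y,r^\lambda)\subset U$, use the level-$(\lambda+1)$ subcover to obtain $x\in X_{\lambda+1}$ with $y\in B(x,r^{\lambda+1})$, and conclude by the absorption inclusion. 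I do not anticipate a genuine obstacle here; the one point requiring a moment's care is that the radius must be shrunk by a \emph{successor} step, from $r^\lambda$ to $r^{\lambda+1}$, so that the triangle inequality and the lexicographic arithmetic cooperate. Everything else is a direct transcription of the classical second-countability arguments.
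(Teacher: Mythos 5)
Your proposal is correct and takes essentially the same route as the paper: balls of radius $r^\lambda$ centred at a family of at most $\abs{\omega_\mu}$ points, with the shrink-to-successor step $r^\lambda \mapsto r^{\lambda+1}$ and the triangle-inequality absorption $B(x,r^{\lambda+1})\subset B(y,r^\lambda)$ doing the work. The only cosmetic difference is in part (2), where the paper notes that the union $\bigcup_{\lambda\in\omega_\mu}A_\lambda$ of subcover centres is a dense set of cardinality at most $\abs{\omega_\mu}$ and then invokes part (1), whereas you check the basis property of the collected balls directly; the two arguments are the same in substance.
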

\begin{proof}
%Let $d$ be a $\Z^{\omega_\mu}$\hyp{}metric of $X$.
\thmpart{1} Let $A \subset X$ be a dense subset such that $\abs{A}\leq\abs{\omega_\mu}$.
The collection $\mathcal{B}=\{B(a,r^\lambda):a \in A, \lambda\in\omega_\mu\}$ has obviously cardinality $\abs{\omega_\mu}$ or smaller.
It remains to show that $\mathcal{B}$ is also a basis for the topology. Let $U \subset X$ be any nonempty open set and let $x \in U$. Then there is $\lambda\in\omega_\mu$ such that $B(x,r^\lambda) \subset U$, and one can find a point $a \in A \cap B(x,r^{\lambda+1})$. Now $x \in B(a,r^{\lambda+1}) \subset B(x,r^{\lambda}) \subset U$ and $B(a,r^{\lambda+1}) \in \mathcal{B}$. Hence $\mathcal{B}$ indeed is a basis.

\thmpart{2} For every $\lambda\in\omega_\mu$ the collection $\{B(x,r^\lambda):x \in X\}$ is an open cover of $X$. Therefore there is a set $A_\lambda \subset X$ such that $\abs{A_\lambda}\leq\abs{\omega_\mu}$ and $\{B(x,r^\lambda):x \in A_\lambda\}$ is an open cover of $X$. The union $A=\bigcup_{\lambda\in\omega_\mu}A_\lambda$ is dense in $X$ and has cardinality $\abs{A}\leq\abs{\omega_\mu}\times\abs{\omega_\mu}=\abs{\omega_\mu}$.
% and it can easily be seen to be dense.
%Thus the claim follows from part~(1) of the proof.
\end{proof}

\begin{lemma}
\label{lemma:T3 lind T4}
Let $X$ be a T\textsubscript{3}-space and let $\kappa$ be a cardinal.
Assume that $X$ is $\kappa$\hyp{}additive and every open cover of $X$ has a subcover of cardinality $\kappa$ or smaller.
Then $X$ is a T\textsubscript{4}-space.
\end{lemma}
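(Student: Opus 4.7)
The plan is to generalize the classical proof that a regular Lindelöf space is normal, with countable operations throughout replaced by operations on families of size less than $\kappa$, which is precisely the role played by $\kappa$-additivity. Given disjoint closed sets $A,B\subset X$, I would first use T\textsubscript{3} together with the covering hypothesis to produce families $\{U_\xi:\xi<\kappa\}$ and $\{V_\xi:\xi<\kappa\}$ of open sets covering $A$ and $B$ respectively, satisfying $\overline{U_\xi}\cap B=\emptyset$ and $\overline{V_\xi}\cap A=\emptyset$ for every $\xi$. To obtain the $U_\xi$, for each $x\in A$ regularity supplies an open neighbourhood of $x$ whose closure misses $B$; together with $X\setminus A$ these form an open cover of $X$, from which the covering hypothesis extracts a subcover of cardinality at most $\kappa$; discarding $X\setminus A$ leaves the $U_\xi$. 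The $V_\xi$ are obtained symmetrically.

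Next I would form the shrinkings
\[
U_\xi'=U_\xi\setminus\bigcup_{\eta\leq\xi}\overline{V_\eta},\qquad
V_\xi'=V_\xi\setminus\bigcup_{\eta\leq\xi}\overline{U_\eta},
\]
and put $U=\bigcup_{\xi<\kappa}U_\xi'$ and $V=\bigcup_{\xi<\kappa}V_\xi'$. The containments $A\subset U$ and $B\subset V$ are straightforward: any $x\in A$ lies in some $U_\xi$ and, because $\overline{V_\eta}\cap A=\emptyset$ for every $\eta$, in none of the $\overline{V_\eta}$, so $x\in U_\xi'$. Disjointness of $U$ and $V$ reduces to the observation that if $z\in U_\xi'\cap V_\eta'$ and, say, $\xi\leq\eta$, then $z\in U_\xi\subset\overline{U_\xi}$ while the definition of $V_\eta'$ forbids $z\in\overline{U_\xi}$; the case $\eta<\xi$ is symmetric.

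The one step that genuinely uses $\kappa$-additivity, and which I expect to be the main (though rather mild) obstacle, is showing that each $U_\xi'$ is open. The set subtracted from $U_\xi$ is a union of at most $|\xi|+1<\kappa$ closed sets $\overline{V_\eta}$; its complement in $X$ is the corresponding intersection of the open sets $X\setminus\overline{V_\eta}$, which is open by $\kappa$-additivity. Hence the subtracted set is closed and $U_\xi'$ is the intersection of two open sets. Once this openness is established, the construction above produces the desired disjoint open neighbourhoods of $A$ and $B$, establishing T\textsubscript{4}.
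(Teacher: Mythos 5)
Your proof is correct and follows essentially the same route as the paper: the classical ``regular plus Lindel\"of implies normal'' argument transfinitely generalized, with $\kappa$\hyp{}additivity invoked exactly where the paper uses it, namely to make the shrunken sets $U_\xi'$, $V_\xi'$ open. The only differences are cosmetic --- you cover $A$ and $B$ by two separate covers rather than one combined cover containing $X\setminus(E\cup F)$, and you subtract over $\eta\leq\xi$ rather than $\eta<\xi$, which if anything makes the disjointness verification cleaner.
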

\begin{proof}
Let $E$ and $F$ be disjoint nonempty closed sets in $X$. Since $X$ is T\textsubscript{3}, for every $e \in E$ one can find a neighborhood $U_e$ such that
$F \subset X\setminus\overline{U}_e$.
Similarly every $f \in F$ has a neighborhood $V_f$ with
$E \subset X\setminus\overline{V}_f$.
Since
\begin{equation}
%\label{eq:}
\mathcal{C} = \{X \setminus (E \cup F)\} \cup \{U_e:e \in E\} \cup \{V_f:f \in F\}
\end{equation}
is an open cover of $X$, the assumed covering property guarantees that $\mathcal{C}$ has a subcover
\begin{equation}
%\label{eq:}
\mathcal{C}' = \{X \setminus (E \cup F)\} \cup \{U_{e_\lambda}:\lambda\in\alpha\} \cup \{V_{f_\lambda}:\lambda\in\alpha\},
\end{equation}
where each $e_\lambda \in E$ and $f_\lambda \in F$, and where $\alpha$ is the initial ordinal of the cardinal $\kappa$.

For every $\lambda\in\alpha$ the sets
\begin{equation}
%\label{eq:}
\begin{split}
A_\lambda &= U_{e_\lambda}\setminus\bigcup_{\nu<\lambda}\overline{V}_{f_\nu} \quad\text{and}\\
B_\lambda &= V_{f_\lambda}\setminus\bigcup_{\nu<\lambda}\overline{U}_{e_\nu}
\end{split}
\end{equation}
are open by hypothesis, and hence the sets
\begin{equation}
%\label{eq:}
A=\bigcup_{\lambda\in\alpha}A_\lambda \quad\text{and}\quad B=\bigcup_{\lambda\in\alpha}B_\lambda
\end{equation}
are neighborhoods of $E$ and $F$, respectively. These neighborhoods are disjoint;
for if $\nu\leq\lambda$, then $\overline{B}_\nu \subset X \setminus A_\lambda$, and so $A_\lambda \cap B_\nu =\emptyset$ and similarly in the case $\nu\geq\lambda$.
\end{proof}

The following two known lemmas are elementary, and they are included only for the sake of an easy reference.

\begin{lemma}
\label{lemma:0-dim}
If a topological space $X$ is zero-dimensional and T\textsubscript{0}, then it is also T\textsubscript{2} and T\textsubscript{3}.
\end{lemma}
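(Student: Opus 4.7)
The plan is to exploit the defining feature of a zero-dimensional space: it has a basis of clopen sets, so any open neighborhood of a point contains a clopen neighborhood. Given a clopen set $U$, the complement $X \setminus U$ is automatically open, which immediately yields disjoint open separations.

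First I would dispose of T$_2$. Given two distinct points $x,y \in X$, the T$_0$ axiom furnishes an open set containing exactly one of them; relabeling if necessary, assume there is an open $V$ with $x \in V$ and $y \notin V$. By zero-dimensionality I can shrink $V$ to a clopen set $U$ with $x \in U \subset V$. Then $U$ and $X \setminus U$ are disjoint open sets containing $x$ and $y$ respectively, proving T$_2$ (and in particular T$_1$).

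For T$_3$ I would argue almost identically, replacing one of the points by a closed set. If $F$ is closed and $x \notin F$, then $X \setminus F$ is an open neighborhood of $x$, so there is a clopen $U$ with $x \in U \subset X \setminus F$. Then $U$ and $X \setminus U$ are disjoint open sets separating $x$ from $F$, which is the regularity condition. Together with the T$_1$ property already established (implied by T$_2$), this gives T$_3$.

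There is really no main obstacle; the only thing to be slightly careful about is the asymmetry of the T$_0$ axiom (the separating open set may contain either point), but this is handled by a relabeling. The whole proof hinges on the single observation that a clopen set and its complement form a disjoint open cover of $X$.
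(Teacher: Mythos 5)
Your proof is correct: the clopen-basis argument with the T\textsubscript{0} relabeling for the Hausdorff part and the direct separation of a point from a closed set for regularity is exactly the standard argument. The paper in fact states this lemma without proof (it is flagged as elementary and included only for reference), and your write-up supplies precisely the proof that is intended.
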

%\begin{proof}
%Because $X$ is zero-dimensional, it has a basis of clopen sets. To see that $X$ is T\textsubscript{2}, let $x,y \in X$ be two distinct points. By the T\textsubscript{0}-property $x$ has a basis neighborhood $B$ which does not contain $y$, or vice versa. Now $B$ and $X \setminus B$ are disjoint neighborhoods of $x$ and $y$, respectively. Similarly, for a closed set $E \subset X$ and a point $x \notin E$ a basis neighborhood $B$ exists such that $x \in B$ and $B \cap E = \emptyset$, and so $B$ and $X \setminus B$ are disjoint neighborhoods of $x$ and $E$.
%\end{proof}

\begin{lemma}
\label{lemma:nested basis}
Let $X$ be a topological T\textsubscript{3}-space with a basis $\mathcal{B}$ and let $x \in X$. Then for each neighborhood $U$ of $x$ there are $B,B'\in\mathcal{B}$ such that $x \in B \subset \overline{B} \subset B' \subset U$.
\end{lemma}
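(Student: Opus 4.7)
The plan is to pick the outer basis element $B'$ first, then use T$_3$ to find a smaller open set whose closure fits inside $B'$, and finally use the basis property again to pick $B$ inside that smaller set.

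More concretely, I would first replace $U$ by an open neighborhood of $x$ (its interior, say) without loss of generality, so that $U$ is open. Since $\mathcal{B}$ is a basis and $U$ is an open neighborhood of $x$, there exists $B' \in \mathcal{B}$ with $x \in B' \subset U$. This gives the outer basis element.

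Next, I would apply T$_3$ in the standard ``shrinking'' formulation: in a T$_3$-space, every open neighborhood of a point $x$ contains the closure of some open neighborhood of $x$. Applied to the open neighborhood $B'$ of $x$, this yields an open set $V$ with $x \in V \subset \overline{V} \subset B'$. Using the basis property once more, there is $B \in \mathcal{B}$ with $x \in B \subset V$, and then $\overline{B} \subset \overline{V} \subset B'$, as desired.

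There is no real obstacle here; the only step that deserves a moment of care is the invocation of T$_3$, since the definition is usually phrased in terms of separating a point from a disjoint closed set by disjoint open sets. One has to note that this is equivalent to the shrinking formulation: given $x$ and an open neighborhood $W$ of $x$, apply the separation property to $x$ and the disjoint closed set $X \setminus W$ to obtain disjoint open sets $V \ni x$ and $V'\supset X\setminus W$; then $\overline{V}\subset X\setminus V'\subset W$. The rest of the argument is just two applications of the definition of a basis.
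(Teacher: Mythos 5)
Your argument is correct and is exactly the standard elementary argument; the paper in fact states this lemma without proof (citing it as known and elementary), and your two applications of the basis property sandwiching the T\textsubscript{3} shrinking step, together with the preliminary reduction to an open $U$, fill that gap properly.
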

%\begin{proof}
%Since $\mathcal{B}$ is a basis, there is $B'\in\mathcal{B}$ such that $x \in B' \subset U$. By the T\textsubscript{3}-property the point $x$ and the closed se $X \setminus B'$ have disjoint neighborhoods $V$ and $V'$, respectively. There must also be $B\in\mathcal{B}$ such that $x \in B \subset V$. The sets $V$ and $V'$ are open and disjoint, and therefore $\overline{V} \cap V' = \emptyset$. Thus $x \in B \subset \overline{B} \subset B' \subset U$.
%\end{proof}

\section{The metrization theorem}
\label{sec:metr}

In order to extend Urysohn's metrization theorem to higher cardinals and thus to $\omega_\mu$\hyp{}metric spaces, a generalization of Hilbert's cube is needed.
%Because $\omega_\mu$\hyp{}metrizable spaces are zero-dimensional, the cube will resemble Cantor's set more than Hilbert's cube.
The product topology of $\{0,1\}^{\omega_\mu}$ is not suitable for this purpose, since it is not $\omega_\mu$\hyp{}additive for $\mu>0$.
%The set itself is suitable, but the topology has to be redefined.

\begin{definition}
Let $\Z^{\omega_\mu}$ be given an $\omega_\mu$\hyp{}metric $d$ by defining $d(x,y)_\lambda=\abs{x_\lambda-y_\lambda}$ for every $\lambda\in\omega_\mu$. The set $Q_\mu=\{0,1\}^{\omega_\mu}\subset\Z^{\omega_\mu}$ with the $\omega_\mu$\hyp{}metric inherited from $\Z^{\omega_\mu}$ is \emph{the generalized Hilbert's cube}.
\end{definition}

A basis for the topology of the cube $Q_\mu$ consists of the products $\prod_{\lambda\in\omega_\mu}U_\lambda$,
where there is $\nu\in\omega_\mu$ such that $U_\lambda$ is a singleton when $\lambda<\nu$ and $U_\lambda=\{0,1\}$ when $\lambda\geq\nu$. The cardinality of this basis is $\abs{\omega_\mu}$.

The embedding theorem~\ref{thm:embed2}, which will be stated and proven shortly, will make use of the classical Urysohn's lemma:

\begin{lemma}[Urysohn's lemma]
\label{lemma:uryson}
Let $X$ be a T\textsubscript{4}-space and let $E$ and $F$ be disjoint nonempty closed sets in $X$. Then there is a continuous mapping $f\!:\!X \to [0,1]$ which satisfies $f(E)=\{0\}$ and $f(F)=\{1\}$.
\end{lemma}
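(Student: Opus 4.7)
The plan is the classical inductive construction due to Urysohn: produce a family of open sets indexed by the dyadic rationals of $[0,1]$ that separates $E$ and $F$ in a strongly nested fashion, and then use it to define the required function.

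First I would record the T\textsubscript{4} property in its interpolation form: whenever $C$ is closed, $U$ is open, and $C \subset U$, there exists an open $V$ with $C \subset V \subset \overline{V} \subset U$. Setting $U_1 = X \setminus F$ and applying this form to $E \subset U_1$ yields an open $U_0$ with $E \subset U_0 \subset \overline{U_0} \subset U_1$. Proceeding by induction on the level $n$, for each new dyadic $r = (2j+1)/2^{n+1}$ whose neighbors $r^- = j/2^n$ and $r^+ = (j+1)/2^n$ already satisfy $\overline{U_{r^-}} \subset U_{r^+}$, one more application of the interpolation form supplies an open $U_r$ with $\overline{U_{r^-}} \subset U_r \subset \overline{U_r} \subset U_{r^+}$. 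This produces a family $\{U_r\}_{r \in D}$ indexed by the dyadic rationals $D \subset [0,1]$ such that $\overline{U_r} \subset U_s$ for all $r,s \in D$ with $r < s$.

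Next I would define $f\!:\!X \to [0,1]$ by $f(x) = \inf\{r \in D : x \in U_r\}$, with the convention $\inf \emptyset = 1$. Since $E \subset U_0$, one has $f \equiv 0$ on $E$; since $\overline{U_r} \cap F = \emptyset$ for every $r \in D$ with $r < 1$, no point of $F$ belongs to any such $U_r$, so $f \equiv 1$ on $F$.

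Continuity then follows by checking preimages of a subbase of $[0,1]$: the set $f^{-1}([0,a)) = \bigcup_{r \in D,\, r < a} U_r$ is open, and $f^{-1}((a,1]) = \bigcup_{r \in D,\, r > a} (X \setminus \overline{U_r})$ is open and, thanks to the density of $D$ in $[0,1]$ combined with the strict nesting, coincides with $\{x \in X : f(x) > a\}$. The only substantive obstacle is maintaining the strong nesting throughout the construction of $\{U_r\}$, since this is precisely what forces $f$ to be continuous once the dyadics are used as index set.
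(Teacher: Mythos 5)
Your argument is correct: it is the canonical dyadic-interpolation proof of Urysohn's lemma, and the continuity check via the subbasic sets $f^{-1}([0,a))$ and $f^{-1}((a,1])$ is handled properly (the density of the dyadics plus the strict nesting $\overline{U_r}\subset U_s$ is indeed exactly what makes the second preimage open). Note, however, that the paper offers no proof of this statement at all --- it is quoted as the classical Urysohn's lemma and used only as an ingredient in Lemma~\ref{lemma:uryson2} and Theorem~\ref{thm:embed2} --- so there is no alternative route in the paper to compare against; your write-up simply supplies the standard argument the paper takes for granted.
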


\begin{lemma}
\label{lemma:uryson2}
Let $X$ be a T\textsubscript{4}-space and let $E$ and $F$ be disjoint nonempty closed sets in $X$. If $X$ is $\omega_1$\hyp{}additive, there is a continuous mapping $f\!:\!X \to \{0,1\}$ which satisfies $f(E)=\{0\}$ and $f(F)=\{1\}$.
\end{lemma}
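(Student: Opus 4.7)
The plan is to reduce to classical Urysohn and exploit $\omega_1$-additivity to upgrade the continuous $[0,1]$-valued separating function to a $\{0,1\}$-valued one.

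First I would apply the classical Urysohn's lemma (Lemma~\ref{lemma:uryson}) to the T\textsubscript{4}-space $X$ and the disjoint closed sets $E,F$ to obtain a continuous map $g\!:\!X\to[0,1]$ with $g(E)=\{0\}$ and $g(F)=\{1\}$. The remaining task is to turn $g$ into a map whose image lies in the discrete two-point space $\{0,1\}$ while still separating $E$ from $F$; equivalently, to produce a clopen set $C\subset X$ with $E\subset C$ and $C\cap F=\emptyset$.

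The key observation is that $\{0\}$ is a $G_\delta$-subset of $[0,1]$, namely $\{0\}=\bigcap_{n\geq 1}[0,1/n)$. Therefore
\[
C := g^{-1}(\{0\}) = \bigcap_{n\geq 1} g^{-1}\bigl([0,1/n)\bigr),
\]
is the intersection of countably many open subsets of $X$. Since $X$ is $\omega_1$-additive and every countable cardinal is strictly smaller than $\abs{\omega_1}$, this intersection is open in $X$. On the other hand, $C$ is closed as the preimage of the closed point $\{0\}\subset[0,1]$ under the continuous map $g$. Hence $C$ is clopen, and by construction $E\subset C$ and $C\cap F=\emptyset$.

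Finally I would define $f\!:\!X\to\{0,1\}$ by $f(x)=0$ for $x\in C$ and $f(x)=1$ for $x\in X\setminus C$. Since both $C$ and $X\setminus C$ are open, the preimages of the singletons in the discrete space $\{0,1\}$ are open, so $f$ is continuous, and clearly $f(E)=\{0\}$, $f(F)=\{1\}$. The only subtlety worth double-checking is the cardinality arithmetic justifying openness of $C$, but this is immediate from the definition of $\omega_1$-additivity; there is no serious obstacle.
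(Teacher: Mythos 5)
Your proof is correct and follows essentially the same route as the paper: apply the classical Urysohn's lemma, observe that $g^{-1}(\{0\})$ is a countable intersection of open sets and hence open by $\omega_1$-additivity, and redefine the function to be $\{0,1\}$-valued. Your explicit remark that $g^{-1}(\{0\})$ is also closed (so its complement is open and both preimages under $f$ are open) is a small, welcome completion of a point the paper leaves implicit.
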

\begin{proof}
%By Urysohn's Lemma~\ref{lemma:uryson} there is a continuous mapping $g\!:\!X \to [0,1]$, where the unit interval has its standard topology, satisfying the given conditions. The desired function $f$ can be defined by setting $f(x)=0$ when $g(x)=0$ and $f(x)=1$ otherwise. Every set $g^{-1}([0,1/n))$, $n\in\N$, is open in $X$ and therefore, by hypothesis, so is their intersection $g^{-1}(0)=\bigcap_{n\in\N}g^{-1}([0,1/n))$. Thus $f$ is continuous.
Let $g\!:\!X \to [0,1]$ be a mapping given by Urysohn's lemma. Define $f\!:\!X \to \{0,1\}$ by setting $f(x)=0$ when $g(x)=0$ and $f(x)=1$ otherwise.
Every set $g^{-1}([0,1/n))$, $n\in\N$, is open in $X$ and therefore, by hypothesis, so is the intersection $f^{-1}(\{0\})=g^{-1}(\{0\})=\bigcap_{n\in\N}g^{-1}([0,1/n))$. Thus $f$ is continuous.
\end{proof}

\begin{theorem}
\label{thm:embed2}
Let $X$ be a topological T\textsubscript{1}- and T\textsubscript{3}-space.
If $X$ is $\omega_\mu$\hyp{}additive and has a basis of cardinality $\abs{\omega_\mu}$ or smaller
for
a regular ordinal $\omega_\mu$, $\mu>0$,
then $X$ can be embedded in the generalized Hilbert's cube $Q_\mu$.
\end{theorem}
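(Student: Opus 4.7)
The plan is to imitate the classical Urysohn embedding argument, replacing each $[0,1]$-valued Urysohn function by its $\{0,1\}$-valued counterpart from Lemma~\ref{lemma:uryson2} and exploiting the $\omega_\mu$-additivity of $X$ to accommodate the $\abs{\omega_\mu}$ many coordinates of $Q_\mu$.

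First I would establish that $X$ is T\textsubscript{4}. By Proposition~\ref{prop:N2 sep lind top} every open cover of $X$ has a subcover of cardinality at most $\abs{\omega_\mu}$; combined with the T\textsubscript{3} and $\omega_\mu$-additivity hypotheses, Lemma~\ref{lemma:T3 lind T4} yields T\textsubscript{4}. Since $\mu>0$, $X$ is in particular $\omega_1$-additive, so Lemma~\ref{lemma:uryson2} is available. I then fix a basis $\mathcal{B}$ with $\abs{\mathcal{B}}\leq\abs{\omega_\mu}$ and enumerate by indices $\lambda\in\omega_\mu$ all pairs $(B_\lambda,B'_\lambda)\in\mathcal{B}\times\mathcal{B}$ satisfying $\emptyset\neq\overline{B_\lambda}\subset B'_\lambda\neq X$; there are at most $\abs{\omega_\mu}^2=\abs{\omega_\mu}$ such pairs. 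For each of them Lemma~\ref{lemma:uryson2} supplies a continuous $f_\lambda\!:\!X\to\{0,1\}$ with $f_\lambda(\overline{B_\lambda})=\{0\}$ and $f_\lambda(X\setminus B'_\lambda)=\{1\}$, and I define $F\!:\!X\to Q_\mu$ coordinatewise by $F(x)_\lambda=f_\lambda(x)$.

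Next I would verify that $F$ is a topological embedding in three steps. Injectivity uses T\textsubscript{1} together with Lemma~\ref{lemma:nested basis}: for distinct $x,y\in X$, applying the lemma to the neighborhood $X\setminus\{y\}$ of $x$ produces a pair $(B_\lambda,B'_\lambda)$ yielding $f_\lambda(x)=0\neq 1=f_\lambda(y)$. For continuity of $F$, each basic open set of $Q_\mu$ fixes the coordinates of index $<\nu$ for some $\nu\in\omega_\mu$; its preimage under $F$ is the intersection of $\abs{\nu}<\abs{\omega_\mu}$ open sets of the form $f_\lambda^{-1}(\{c_\lambda\})$, which is open by $\omega_\mu$-additivity. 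For openness onto the image, given $x$ in an open $U\subset X$, Lemma~\ref{lemma:nested basis} provides a pair $(B_\lambda,B'_\lambda)$ with $x\in B_\lambda\subset\overline{B_\lambda}\subset B'_\lambda\subset U$; then the single-coordinate cylinder $V=\{y\in Q_\mu:y_\lambda=0\}$ contains $F(x)$ and satisfies $F^{-1}(V)=f_\lambda^{-1}(\{0\})\subset B'_\lambda\subset U$.

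The main obstacle I expect is this last step: basic open sets of $Q_\mu$ fix a whole initial segment of coordinates rather than a single coordinate, so continuity of a single coordinate projection cannot be invoked directly. The key observation making the argument go through is that $V$, although not itself a basic open set, is the union of the balls $B(z,r^{\lambda+1})$ over $z\in V$ and is therefore open in $Q_\mu$.
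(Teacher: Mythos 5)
Your proposal is correct and follows essentially the same route as the paper's proof: T\textsubscript{4} via Proposition~\ref{prop:N2 sep lind top} and Lemma~\ref{lemma:T3 lind T4}, an enumeration of basis pairs with nested closures, the $\{0,1\}$-valued Urysohn functions of Lemma~\ref{lemma:uryson2} assembled coordinatewise, continuity from $\omega_\mu$-additivity, injectivity from T\textsubscript{1} plus Lemma~\ref{lemma:nested basis}, and openness onto the image via a single-coordinate cylinder. The only differences are cosmetic (the roles of $0$ and $1$ are swapped, and you spell out why the cylinder $\{y: y_\lambda=0\}$ is open in the $\omega_\mu$-metric topology, a point the paper leaves implicit), plus the harmless caveat that your restriction $B'_\lambda\neq X$ requires the trivial case $U=X$ to be noted separately in the openness step.
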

\begin{proof}
It follows from Proposition~\ref{prop:N2 sep lind top} and Lemma~\ref{lemma:T3 lind T4} that $X$ is also T\textsubscript{4}.

Let $\{B_j \subset X : j \in J\}$ be a basis for $X$ such that $\abs{J}\leq\abs{\omega_\mu}$.
Let $P$ be the set of pairs $(i,j) \in J \times J$ for which $\overline{B_i} \subset B_j$. Since $\abs{P} \leq \abs{J}$, the elements of $P$ can be indexed so that $P=\{(i_\lambda,j_\lambda):\lambda\in\omega_\mu\}$.
%If $\abs{P}<\abs{\omega_\mu}$, one of the pairs may be repeated.

For every $\lambda\in\omega_\mu$ a continuous mapping $f_\lambda\!:\!X \to \{0,1\}$ is chosen such that $f_\lambda(\overline{B_{i_\lambda}})=\{1\}$ and $f_\lambda(X\setminus B_{j_\lambda})=\{0\}$. This is possible by Lemma~\ref{lemma:uryson2}.
%The mapping $f\!:\!X \to Q_\mu = \{0,1\}^{\omega_\mu}$ is defined componentwise by the mappings $f_\lambda$.
We define $f\!:\!X \to Q_\mu = \{0,1\}^{\omega_\mu}$ componentwise by the mappings $f_\lambda$ and show that $f$ embeds $X$ in $Q_\mu$.

The mapping $f$ is continuous: Let $x \in X$ be a point and let $U$ be a neighborhood of $f(x)$. Then there exist $\nu\in\omega_\mu$ and sets $U_\lambda \subset \{0,1\}$ with $f_\lambda(x)\in U_\lambda$ for all $\lambda\in\omega_\mu$ and $U_\lambda=\{0,1\}$ when $\lambda\geq\nu$, so that $U$ contains the product $\prod_{\lambda\in\omega_\mu}U_\lambda$.
Because each $f_\lambda$ is continuous,
%and the topology of $\{0,1\}$ is discrete,
for every $\lambda<\nu$
one can find an open set $V_\lambda \subset X$ so that $f_\lambda(V_\lambda) \subset U_\lambda$.
Since $X$ is $\omega_\mu$\hyp{}additive, the set $V=\bigcap_{\lambda<\nu}V_\lambda$ is a neighborhood of $x$. Obviously $f(V) \subset \prod_{\lambda\in\omega_\mu}U_\lambda \subset U$.

The mapping $f$ is one-to-one: Let $x,y \in X$ be two distinct points.
By the T\textsubscript{1}-property and Lemma~\ref{lemma:nested basis} there are $i,j \in J$ such that $x \in B_i \subset \overline{B_i} \subset B_j$.
Thus $(i,j)=(i_\lambda,j_\lambda)$ for some $\lambda\in\omega_\mu$. Now $f_\lambda(x)=1$ and $f_\lambda(y)=0$, and so $f(x) \neq f(y)$.

Let $g\!:\!f(X) \to X$ be the inverse of $f$. It remains to show that $g$ is continuous.
Fix $x \in X$ and let $U$ be a neighborhood of $x$.
%; osoitetaan, että löytyy $V \subset I^\alpha$, jolle $g(V \cap f(X)) \subset U$.
By Lemma~\ref{lemma:nested basis} there is $\lambda\in\omega_\mu$ for which $x \in B_{i_\lambda}$ and $B_{j_\lambda} \subset U$.
%By the T\textsubscript{4}-property there is $\lambda\in\omega_\mu$ for $x \in B_{i_\lambda}$ ja $B_{j_\lambda} \subset U$.
Now $f_\lambda(x)=1$ and the set $V=\prod_{\nu\in\omega_\mu}V_\nu$, where $V_\lambda=\{1\}$ and $V_\nu=\{0,1\}$ for $\nu\neq\lambda$, is a neighborhood of $f(x)$ in $Q_\mu$.
For every $y \in g(V \cap f(X))$ we have $f(y) \in V$, $f_\lambda(y)=1$, and so $y \in B_{j_\lambda} \subset U$. Thus $g(V \cap f(X)) \subset U$ and $g$ is continuous.
%Because for every $y \in g(V \cap f(X))$ clearly $f(y) \in V$, $f_\lambda(y)\in(0,1]$, so $y \notin X \setminus B_{j_\lambda}$ and therefore $y \in B_{j_\lambda} \subset U$. Thus $g(V \cap f(X)) \subset U$ and $g$ is continuous.
\end{proof}

\begin{theorem}
\label{thm:uryson}
For any topological space $X$ and any regular ordinal $\omega_\mu>\omega_0$ the following are equivalent:
\begin{enumerate}
	\item\label{item:1} $X$ is $\omega_\mu$\hyp{}additive, T\textsubscript{0}, zero-dimensional and has a basis of cardinality $\abs{\omega_\mu}$ or smaller.
	\item\label{item:4} $X$ is $\omega_\mu$\hyp{}additive, T\textsubscript{1} and T\textsubscript{3}, and has a basis of cardinality $\abs{\omega_\mu}$ or smaller.
	\item\label{item:2} $X$ is $\omega_\mu$\hyp{}metrizable and has a basis of cardinality $\abs{\omega_\mu}$ or smaller.
	\item\label{item:2b} $X$ is $\omega_\mu$\hyp{}metrizable and contains a dense set of cardinality $\abs{\omega_\mu}$ or smaller.
	\item\label{item:2c} $X$ is $\omega_\mu$\hyp{}metrizable and every open cover of $X$ has a subcover of cardinality $\abs{\omega_\mu}$ or smaller.
	\item\label{item:3} $X$ can be embedded in $Q_\mu$.
\end{enumerate}
\end{theorem}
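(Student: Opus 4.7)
The plan is to establish the cycle \itemfollows{1}{4} $\Rightarrow$ \itemfollows{4}{3} \itemref{3} $\Rightarrow$ \itemref{1} together with the auxiliary equivalences \itemref{2} $\Leftrightarrow$ \itemref{2b} and \itemref{2} $\Leftrightarrow$ \itemref{2c}, which jointly deliver the full six-way equivalence. Three steps of the cycle are essentially free: \itemfollows{1}{4} is immediate from Lemma~\ref{lemma:0-dim}, since T\textsubscript{0} together with zero-dimensionality forces T\textsubscript{2} (hence T\textsubscript{1}) and T\textsubscript{3}; \itemfollows{4}{3} is precisely Theorem~\ref{thm:embed2}; and \itemref{3} $\Rightarrow$ \itemref{2} holds because $Q_\mu$ is by definition $\omega_\mu$-metrizable and the explicit basis described just after its definition has cardinality $\abs{\omega_\mu}$, both properties being hereditary to subspaces. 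The auxiliary equivalences drop out of Propositions~\ref{prop:N2 sep lind top} and \ref{prop:N2 sep lind metr}: the former gives the two forward implications, the latter the two converses under the $\omega_\mu$-metrizability hypothesis.

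The main obstacle, and the only step that requires genuine work, is \itemref{2} $\Rightarrow$ \itemref{1}. Given an $\omega_\mu$-metric space $X$ with a basis of cardinality $\leq\abs{\omega_\mu}$, the basis condition transfers verbatim, so I need only check $\omega_\mu$-additivity, T\textsubscript{0}, and zero-dimensionality. T\textsubscript{1} (and hence T\textsubscript{0}) is standard from the metric axioms. For $\omega_\mu$-additivity, given open sets $\{U_i\}_{i\in I}$ with $\abs{I}<\abs{\omega_\mu}$ and $x\in\bigcap_i U_i$, I would choose $\lambda_i\in\omega_\mu$ with $B(x,r^{\lambda_i})\subset U_i$; the regularity of $\omega_\mu$ together with $\abs{I}<\abs{\omega_\mu}$ forces $\{\lambda_i:i\in I\}$ to be bounded in $\omega_\mu$ by some $\lambda$, whence $B(x,r^\lambda)\subset\bigcap_i U_i$. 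For zero-dimensionality I would pass to the auxiliary ultrametric $\delta(x,y)=r^{n_{xy}}$ from the proof of Proposition~\ref{prop:Z R} (applied to the given $\Z^{\omega_\mu}$-metric viewed inside $\R^{\omega_\mu}$); $\delta$ induces the same topology, and the strong triangle inequality already verified in that proof shows that each $\delta$-ball $B_\delta(x,r^\lambda)$ is simultaneously open and closed, yielding the required clopen basis.

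In short, the six-way equivalence collapses to a modest amount of genuinely new verification, with the bulk of the content supplied by Theorem~\ref{thm:embed2} and the preliminary propositions. The only subtleties I anticipate are tracking the clopen nature of the $\delta$-balls for zero-dimensionality and being explicit about where the regularity of $\omega_\mu$ enters the $\omega_\mu$-additivity argument; no further ingredients beyond those already established in the excerpt appear to be needed.
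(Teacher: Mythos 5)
Your proposal is correct and follows essentially the same route as the paper: the cycle \itemref{1} $\Rightarrow$ \itemref{4} $\Rightarrow$ \itemref{3} $\Rightarrow$ \itemref{2} via Lemma~\ref{lemma:0-dim}, Theorem~\ref{thm:embed2} and the stated properties of $Q_\mu$, together with the equivalence of \itemref{2}, \itemref{2b}, \itemref{2c} from Propositions~\ref{prop:N2 sep lind top} and~\ref{prop:N2 sep lind metr}. The only (immaterial) difference is the return leg: the paper closes with \itemfollows{2b}{1}, noting that the balls $B(a,r^\lambda)$ centred at a dense set of size at most $\abs{\omega_\mu}$ form a clopen basis of that cardinality, whereas you close with \itemref{2} $\Rightarrow$ \itemref{1} using the hypothesized small basis plus the clopen ultrametric balls from Proposition~\ref{prop:Z R} --- and you in fact spell out the $\omega_\mu$-additivity and T\textsubscript{0} verifications that the paper leaves implicit.
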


\begin{proof}%[Proof of Theorem~\ref{thm:uryson}]
%Lemma~\ref{lemma:0-dim} and Theorem~\ref{thm:embed2} provide the implications (\ref{item:1})$\Rightarrow$(\ref{item:4}) and (\ref{item:4})$\Rightarrow$(\ref{item:3}).
%The generalized Hilbert's cube $Q_\mu$ is $\omega_\mu$\hyp{}metrizable by definition and has a basis of cardinality $\abs{\omega_\mu}$, whence (\ref{item:3})$\Rightarrow$(\ref{item:2}).
%By Propositions~\ref{prop:N2 sep lind top} and~\ref{prop:N2 sep lind metr} the conditions (\ref{item:2}), (\ref{item:2b}), and (\ref{item:2c}) are equivalent.
Lemma~\ref{lemma:0-dim} and Theorem~\ref{thm:embed2} provide the implications \itemfollows{1}{4} and \itemfollows{4}{3}.
The generalized Hilbert's cube $Q_\mu$ is $\omega_\mu$\hyp{}metrizable by definition and has a basis of cardinality $\abs{\omega_\mu}$, whence \itemfollows{3}{2}.
By Propositions~\ref{prop:N2 sep lind top} and~\ref{prop:N2 sep lind metr} the conditions \itemref{2}, \itemref{2b}, and \itemref{2c} are equivalent.

%The implication (\ref{item:2})$\Rightarrow$(\ref{item:4}) is seen as follows: By Proposition~\ref{prop:N2 sep lind metr}, $X$ has a dense subset $A$ such that $\abs{A}\leq\abs{\omega_\mu}$. The collection $\{B(a,r^\lambda):a \in A,\lambda\in\omega_\mu\}$ can easily be verified to be a clopen basis for $X$, and its cardinality is manifestly $\abs{\omega_\mu}$ or smaller.
%The implication (\ref{item:2b})$\Rightarrow$(\ref{item:1}) is seen as follows. Let $A$ be a dense subset of $X$ such that $\abs{A}\leq\abs{\omega_\mu}$. The collection $\{B(a,r^\lambda):a \in A,\lambda\in\omega_\mu\}$ can easily be verified to be a clopen basis for $X$, and its cardinality is manifestly $\abs{\omega_\mu}$ or smaller.
The implication \itemfollows{2b}{1} is seen as follows. Let $A$ be a dense subset of $X$ such that $\abs{A}\leq\abs{\omega_\mu}$. The collection $\{B(a,r^\lambda):a \in A,\lambda\in\omega_\mu\}$ can easily be verified to be a clopen basis for $X$, and its cardinality is manifestly $\abs{\omega_\mu}$ or smaller.
\end{proof}

\begin{remark}
The set $Q_\mu$ is considered to be a generalization of Hilbert's cube due to its role in Theorem~\ref{thm:uryson}. The cube $Q_0$, however, is a Cantor set with its usual topology, and so the theorem does not hold for $\mu=0$.
% Because the case of $\mu=0$ is covered in Urysohn's metrization theorem, only uncountable ordinals $\omega_\mu$ will be covered here.
\end{remark}

\begin{remark}
Let $\omega_\mu$, $\mu>0$, be a regular ordinal. Consider a topological space $X$ which has two bases, one of cardinality $\abs{\omega_\mu}$ or smaller and the other consisting of clopen sets. Does $X$ have a basis which has both of these properties?

First, $X$ can be assumed to be a T\textsubscript{0}-space; it is sufficient to consider the bases for the Kolmogorov quotient $KQ(X)$ of $X$, and $KQ(X)$ is a T\textsubscript{0}-space.
By Theorem~\ref{thm:uryson} $\omega_\mu$\hyp{}additivity is sufficient for such a basis to exist.
If $X$ is strongly zero-dimensional, every open cover of $X$ has a refinement where the covering sets are disjoint. Any such refinement of the basis for $X$ that has cardinality $\abs{\omega_\mu}$ or smaller is a suitable clopen basis.
Without any further assumptions, however, it is unclear whether or not such a basis exists.
\end{remark}

\section{Acknowledgements}

The author wishes to acknowledge the suggestions offered to him by conversations with Heikki Junnila.

%% End of text

\bibliographystyle{elsarticle-num}
\bibliography{uryson}

\end{document}